\title{\textbf{Square-Zero Basis}\\
\textbf{of Matrix Lie Algebras}}
\author{\textsc{R. Dur\'an D\'{\i}az$^1$,
V. Gayoso Mart\'{\i}nez$^2$}
\and \textsc{L. Hern\'andez Encinas$^2$,
and J. Mu\~{n}oz Masqu\'e$^2$}
\medskip \\
$^1$ Departamento de Autom\'atica,
Universidad de Alcal\'a,\\
E-28871 Alcal\'a
de Henares, Spain\\
\textit{E-mail:\/} \texttt{raul.duran@uah.es}\\
$^2$ Instituto de Tecnolog\'{\i}as F\'{\i}sicas
y de la Informaci\'on (ITEFI)\\Consejo Superior
de Investigaciones Cient\'{\i}ficas (CSIC),\\
E-28006 Madrid, Spain\\\textit{E-mail:\/}
\texttt{\{victor.gayoso,\!\! luis,\!\! jaime\}@iec.csic.es}
}
\date{}
\newtheorem{theorem}{Theorem}[section]
\newtheorem{proposition}[theorem]{Proposition}
\newtheorem{lemma}[theorem]{Lemma}
\newtheorem{corollary}[theorem]{Corollary}
\theoremstyle{remark}
\newtheorem{remark}[theorem]{Remark}
\newtheorem{notation}[theorem]{Notation}
\newtheorem{definition}[theorem]{Definition}
\newtheorem{example}[theorem]{Example}
\begin{document}

\maketitle

\begin{abstract}
\noindent A method is obtained to compute the maximum number
of functionally independent invariant functions under the action
of a linear algebraic group as long as its Lie algebra admits
a base of square-zero matrices. Some applications are also given.

\end{abstract}

\smallskip

\noindent\emph{Mathematics Subject Classification 2010:\/}
Primary 14L24; Secondary 15B33, 17B10

\smallskip

\noindent\emph{Key words and phrases:}\/ Invariant function,
Lie algebra of matrices, linear algebraic groups,
linear representation, square-zero matrix

\section{A class of Lie algebras}
Square-zero matrices have been dealt with in several settings
and with different purposes; for example, see \cite{Botha1},
\cite{Botha2}, \cite{Botha3}, \cite{KOV}, \cite{Mohammadian},
\cite{Takahashi}, among other authors. Below, we consider
such matrices in connection with the following question:

\begin{itemize}
\item[(*)]
Let $\mathbb{F}$ be a field. We try to find out whether a given
Lie subalgebra $\mathfrak{g}$ in $\mathfrak{gl}(n,\mathbb{F})$
admits a basis $\mathcal{B}$ (as a vector space over $\mathbb{F}$)
such that the square of any matrix in $\mathcal{B}$ is zero.
\end{itemize}
\begin{lemma}
\label{lemma1}Let $G\subseteq GL(n,\mathbb{F})$ be a linear algebraic
group with associated Lie algebra $\mathfrak{g}$. If $U$ is a square-zero
matrix in the Lie subalgebra
$\mathfrak{g}\subset \mathfrak{gl}(n,\mathbb{F})$, then $I+tU$
belongs to $G$, $\forall t\in\mathbb{F}$, where $I\in GL(V)$
denotes the identity map.
\end{lemma}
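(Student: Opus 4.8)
The plan is to realise $I+tU$ as the value at the parameter $t$ of the one-parameter subgroup of $G$ attached to $U$, using that the hypothesis $U^{2}=0$ collapses this subgroup to something affine-linear in $t$. First I would check the elementary facts that sit on the $GL(n,\mathbb{F})$ side: $I+tU$ is invertible, since $(I+tU)(I-tU)=I-t^{2}U^{2}=I$; the map $\gamma(t)=I+tU$ is a homomorphism from the additive group of $\mathbb{F}$ into $GL(n,\mathbb{F})$, because $\gamma(t)\gamma(s)=I+(t+s)U+ts\,U^{2}=\gamma(t+s)$; and its differential at the origin sends $1$ to $U$. Dually, the matrix exponential of $tU$ truncates, $\exp(tU)=\sum_{k\ge 0}t^{k}U^{k}/k!=I+tU$, precisely because $U^{2}=0$. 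So the element whose membership in $G$ we must establish is exactly $\exp(tU)$.

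The remaining task is to show $\exp(tU)\in G$, and this is where $U\in\mathfrak{g}$ is used. Since $G$ is a closed subgroup of $GL(n,\mathbb{F})$, its Lie algebra is $\mathfrak{g}=T_{I}G$ and the exponential map of $G$ is the restriction to $\mathfrak{g}$ of the exponential of $GL(n,\mathbb{F})$; hence $\exp(X)\in G$ for every $X\in\mathfrak{g}$, and in particular $\exp(tU)\in G$ for all $t$. Over $\mathbb{R}$ or $\mathbb{C}$ this is the naturality of the exponential for the Lie subgroup $G\subseteq GL(n,\mathbb{F})$; for a linear algebraic group in characteristic zero it is the standard fact that the exponential of a nilpotent element of $\mathfrak{g}$ lands in $G$. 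Either way, combining with the previous step gives $I+tU=\exp(tU)\in G$, which is the assertion.

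I expect the middle step — that $\exp(tU)$ stays inside $G$ rather than merely inside $GL(n,\mathbb{F})$ — to be the only point needing care; the rest is a two-line computation. If one prefers to bypass the exponential, the same conclusion can be reached geometrically: by the first paragraph $H:=\{I+tU:t\in\mathbb{F}\}$ is a one-dimensional irreducible closed subgroup of $GL(n,\mathbb{F})$ with tangent space $\mathbb{F}U$ at $I$; since $\mathbb{F}U\subseteq\mathfrak{g}=T_{I}G$, the closed subgroup $H\cap G$ of $H$ has the same tangent space $\mathbb{F}U$ at $I$, hence the same dimension as $H$, and an irreducible variety has no proper closed subvariety of full dimension, so $H\cap G=H$, i.e.\ $H\subseteq G$. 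In both arguments the square-zero hypothesis does exactly one thing: it makes the one-parameter subgroup through $U$ affine-linear in the parameter, so that its values can be written down explicitly.
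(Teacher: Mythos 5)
Your closing geometric argument is, almost word for word, the paper's own proof: the paper sets $H=\{I+tU:t\in\mathbb{F}\}$, notes that it is a one-dimensional linear algebraic group with Lie algebra $\mathfrak{h}=\mathbb{F}U$, uses $\mathfrak{h}\subseteq\mathfrak{g}$ to conclude $\dim(G\cap H)=\dim H=1$, hence $G\cap H=H$, i.e.\ $H\subseteq G$. So the content of the paper's proof is present in your proposal, but only as the fallback that you say you would ``prefer to bypass''.

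The route you actually put forward as the main proof has a genuine gap in the paper's setting. Lemma \ref{lemma1} is stated over an arbitrary field $\mathbb{F}$, and the point of the surrounding discussion (see the comment after Theorem \ref{th1}) is precisely positive characteristic. There the identity $\exp(tU)=I+tU$ still makes formal sense because $U^{2}=0$, but the step you yourself single out as the crux --- that $\exp(tU)$ lies in $G$ because $U\in\mathfrak{g}$ --- is justified in your text only by real/complex Lie theory or by the characteristic-zero fact about exponentials of nilpotent elements of $\mathfrak{g}$. Over a general $\mathbb{F}$ there is no exponential map from $\mathfrak{g}$ to $G$ to appeal to, and for a square-zero $U$ the assertion ``$\exp(tU)=I+tU\in G$'' is literally the statement of the lemma, so citing it is circular outside characteristic zero. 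Consequently your primary argument proves the lemma only in characteristic zero, and the burden falls entirely on the fallback paragraph, which coincides with the paper. One further caution, which applies equally to your fallback and to the paper's proof: passing from $\mathbb{F}U\subseteq T_{I}G$ to $\dim(G\cap H)=1$ tacitly uses that the tangent space at $I$ of the (reduced) intersection equals $\mathfrak{g}\cap\mathfrak{h}$ and that this dimension equals $\dim(G\cap H)$; both facts are automatic in characteristic zero (algebraic groups are smooth and $\operatorname{Lie}(G\cap H)=\mathfrak{g}\cap\mathfrak{h}$) but are delicate in characteristic $p$. Since the paper makes the same move, this does not count against you in the comparison, but it is the step to scrutinize if you want the argument to cover positive characteristic.
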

\begin{proof}
If $U\in \mathfrak{g}$ is a square-zero matrix, then
$H=\{I+tU:t\in \mathbb{F}\}$ is a linear algebraic group
of dimension $1$ with Lie algebra
$\mathfrak{h}=\{ tU:t\in\mathbb{F}\} $, and by virtue
of the assumption, we have
$\mathfrak{g}\cap \mathfrak{h}=\mathfrak{h}$.
Hence $\dim(G\cap H)=\dim H=1$,
so that $H=G\cap H$, or equivalently $H\subseteq G$.
\end{proof}
\begin{notation}
Notations and elementary properties of algebraic sets
and groups have been taken from Fogarty's book \cite{Fogarty}.

The Lie algebra $\mathfrak{g}$ of a linear algebraic group $G$
is identified to the Lie algebra of left-invariants derivations
(cf.\ \cite[3.17]{Fogarty}), namely
$\mathfrak{g}=\operatorname*{Der}
\nolimits_{\mathbb{F}}(\mathbb{F}[G])^G$.

If $A=(a_{ij})_{i,j=1}^n\in \mathfrak{gl}(n,\mathbb{F})$,
then the corresponding invariant derivation is given by
$D_A=\sum _{i,j,k=1}^n a_{ik}x_{ji}\frac{\partial}{\partial x_{jk}}$.
\end{notation}
\begin{definition}
Let $G\subseteq GL(n,\mathbb{F})$ be a linear algebraic group
and let $V=\mathbb{F}^n$. A function
$\mathcal{I}\in \mathbb{F[}V^\ast ]=S^\bullet (V^\ast )$ is said
to be $G$-invariant if $\mathcal{I}\left( g\cdot v\right)
=\mathcal{I}(v)$ for all $g\in G$ and all $v\in V$.
\end{definition}
The importance of (*) lies in the following result:
\begin{theorem}
\label{th1}
Let $G$ be a linear algebraic group, let
$\rho\colon G\to GL(n,\mathbb{F})$ be a linear representation
of $G$, and let
$\rho_\ast \colon \mathfrak{g}\to\mathfrak{gl}(n,\mathbb{F})$
be the homomorphism of Lie algebras induced by $\rho$.
If $V=\mathbb{F}^n$ and $\mathfrak{g}$ satisfies the property
\emph{(*)}, then every $G$-invariant function
$\mathcal{I}\in\mathbb{F[}V^\ast ]$ is a common first-integral
of the system of derivations $\rho_{\ast}(X)$,
$\forall X\in\mathfrak{g}$. Hence, the number of algebraically
independent $G$-invariant functions in $\mathbb{F[}V^\ast ]$
is upper bounded by the difference $n^2-r$, where $r$ is
the generic rank of the $\mathbb{F[}V^\ast ]$-module $\mathcal{M}$
spanned by all the derivations $\rho_\ast (X)$,
$\forall X\in \mathfrak{g}$; i.e., $r$ is the dimension
of the $\mathbb{F}(V^\ast)$-vector space
$\mathbb{F}(V^\ast )\otimes_{\mathbb{F}[V^\ast ]}\mathcal{M}$.
\end{theorem}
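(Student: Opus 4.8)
The plan is to prove the two assertions in turn: first that a $G$-invariant $\mathcal{I}$ is annihilated by every derivation $D_{\rho_\ast(X)}$, $X\in\mathfrak{g}$, and then the numerical bound, which will follow from a standard generic-rank count. First I would reduce the first claim to a basis. Since the invariant derivation $D_A=\sum_{i,j,k}a_{ik}x_{ji}\,\partial/\partial x_{jk}$ attached to $A\in\mathfrak{gl}(n,\mathbb{F})$ depends $\mathbb{F}$-linearly on $A$, and $\rho_\ast$ is $\mathbb{F}$-linear, the map $X\mapsto D_{\rho_\ast(X)}$ is $\mathbb{F}$-linear on $\mathfrak{g}$; hence it suffices to prove $D_{\rho_\ast(X)}\mathcal{I}=0$ for $X$ ranging over a basis $\mathcal{B}$ of $\mathfrak{g}$, and by property~(*) I may take $\mathcal{B}$ to consist of square-zero matrices.

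Next, fix such an $X$, so that $X^2=0$. By Lemma~\ref{lemma1} the whole affine line $I+tX$, $t\in\mathbb{F}$, lies in $G$, so $G$-invariance of $\mathcal{I}$ yields the identity $\mathcal{I}\bigl(\rho(I+tX)\cdot v\bigr)=\mathcal{I}(v)$ for every $t\in\mathbb{F}$ and every $v\in V$. Since $\rho$ is a morphism of algebraic groups and $t\mapsto I+tX$ is a morphism from the affine line, $t\mapsto\rho(I+tX)$ is a polynomial curve in $M_n(\mathbb{F})$ whose linear term is $\rho_\ast(X)$, by the very definition of the induced homomorphism; thus, for fixed $v$, the left-hand side of the identity is a polynomial in $t$ that is identically constant, and differentiating it at $t=0$ turns the identity into $d\mathcal{I}_v\bigl(\rho_\ast(X)\cdot v\bigr)=\bigl(D_{\rho_\ast(X)}\mathcal{I}\bigr)(v)=0$. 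As $v$ is arbitrary this gives $D_{\rho_\ast(X)}\mathcal{I}=0$, and together with the linearity reduction it shows that $\mathcal{I}$ is a common first integral of the system $\{D_{\rho_\ast(X)}:X\in\mathfrak{g}\}$; in particular $\mathcal{I}$ is killed by every element of the module $\mathcal{M}$.

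For the bound, let $\mathcal{I}_1,\dots,\mathcal{I}_k\in\mathbb{F}[V^\ast]$ be $G$-invariant and algebraically independent over $\mathbb{F}$. By the previous step each $\mathcal{I}_j$ is annihilated by every $D\in\mathcal{M}$, so each differential $d\mathcal{I}_j$ lies in the annihilator of $\mathcal{M}$ inside the module of K\"ahler differentials $\Omega_{\mathbb{F}[V^\ast]/\mathbb{F}}$; after extending scalars to $\mathbb{F}(V^\ast)$, all the $d\mathcal{I}_j$ lie in the annihilator of $\mathbb{F}(V^\ast)\otimes_{\mathbb{F}[V^\ast]}\mathcal{M}$, which is a vector space of dimension $n^2-r$. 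On the other hand, algebraic independence of the $\mathcal{I}_j$ forces $d\mathcal{I}_1,\dots,d\mathcal{I}_k$ to be $\mathbb{F}(V^\ast)$-linearly independent (Jacobian criterion); hence $k\le n^2-r$, as asserted.

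The decisive point, and the place where property~(*) is genuinely needed, is the differentiation step: a general tangent vector $X\in\mathfrak{g}$ is not realized by any polynomial one-parameter subgroup of $G$ — one would be forced to use $\exp(tX)$, which is not a morphism of varieties — so there would be no honest curve to differentiate the invariance identity along; Lemma~\ref{lemma1} supplies exactly the algebraic curve $\{I+tX\}\subseteq G$ that makes the argument work over an arbitrary field. A lesser point requiring care is the Jacobian criterion used at the end, which needs algebraically independent polynomials to have linearly independent differentials at the generic point; this holds in characteristic zero, or under a suitable separability assumption, which I take to be part of the standing hypotheses.
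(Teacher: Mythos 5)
Your proof is correct and follows essentially the same route as the paper's: property (*) together with Lemma \ref{lemma1} supplies the algebraic curves $I+tB$ in $G$, differentiating the invariance identity at $t=0$ and using the $\mathbb{F}$-linearity of $X\mapsto\rho_\ast(X)$ gives the first-integral property, and the bound then comes from $d\mathcal{I}$ annihilating $\mathcal{M}$ plus a generic-rank dimension count. You are in fact slightly more explicit than the paper, which compresses the final counting argument (and the separability/Jacobian caveat you rightly flag) into the words ``we can conclude.''
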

\begin{proof}
Let $\mathcal{B}$ be basis for $\mathfrak{g}$ by fulfilling
the condition in (*). By virtue of Lemma \ref{lemma1},
the matrix $I+tB$ belongs to $G$ and we have
$\mathcal{I}\left( (I+tB)\cdot v\right) =\mathcal{I}(v)$, for all
$t\in \mathbb{F}$, $B\in\mathcal{B}$, and by taking derivatives
at $t=0$, we deduce that $\rho _\ast (X)(\mathcal{I})=0$,
$\forall X\in \mathfrak{g}$, because the map
$\mathfrak{g\ni}X\mapsto\rho _\ast(X)
\in \operatorname*{Der}\mathbb{F}[V^\ast ]$ is
$\mathbb{F}$-linear. Consequently, if
$\mathcal{B}=\{ B_1,\dotsc,B_m\} $, then the
$\mathbb{F}[V^\ast ]$-module $\mathcal{M}$
is spanned by the invariant vector fields
$\rho _\ast (B_i)$, $1\leq i\leq m$, and the differential
$d\mathcal{I}\in \Omega _{\mathbb{F}}(\mathbb{F}[V^\ast ])$
of every invariant function $\mathcal{I}$ verifies
$d\mathcal{I}(X)=0$, $\forall X\in \mathcal{M}$,
and we can conclude.
\end{proof}
In classical invariant theory over complex numbers a method
for computing the maximum number of algebraically independent
invariants consists in solving the linear equations arising
from the system of first integrals of vector fields
$\rho _\ast (X)$, $X\in \mathfrak{g}$; e.g., see
\cite[Theorem 4.5.2]{Sturmfels}. Theorem \ref{th1} extends
this procedure to a class of linear representations in positive
characteristic.

It would also be interesting to adapt the algorithms given
in \cite{DK} to the linear representations of a linear
algebraic group whose Lie algebra satisfies
the condition (*) in positive characteristic.
\begin{remark}
\label{remark1}
As $\rho _\ast \colon\mathfrak{g}\to \mathfrak{gl}(n,\mathbb{F})$
is a homomorphism of Lie algebras, $\mathcal{M}$ is an
involutive submodule in $\operatorname*{Der}\mathbb{F}[V^\ast ]$.
In the real or complex cases, Frobenius theorem implies
that the maximum number of algebraically independent
first-integral functions of $\mathcal{M}$ is $n^2-r$ exactly,
but in general the upper bound $n^2-r$ is not necessarily
reached as several of these first-integral functions may be
fractional or even transcendental functions. Nevertheless, we have
\end{remark}
\begin{corollary}
\label{corollary1}
If $\rho\colon G\to GL(n,\mathbb{F})$ is as in \emph{Theorem \ref{th1}},
$\mathbb{F}$ is algebraically closed of characteristic zero, and
$G=SL(n,\mathbb{F})$ or $G=Sp(2n,\mathbb{F})$, then
the algebra $\mathbb{F}[V^\ast ]^G$ of $G$-invariant functions is an
$\mathbb{F}$-algebra of polynomials in $n^2-r$ variables.
\end{corollary}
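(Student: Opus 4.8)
The plan is to identify $\mathbb{F}[V^{\ast}]^{G}$ with an explicit polynomial ring and to count its generators. Write $N$ for the size of the ambient matrices, so $N=n$ when $G=SL(n,\mathbb{F})$ and $N=2n$ when $G=Sp(2n,\mathbb{F})$; then $V^{\ast}\cong\operatorname{Mat}_{N}(\mathbb{F})$ carries the translation action of $G$ through the defining representation $\rho$ underlying the derivations $D_{A}$, and $n^{2}-r$ in the statement is to be read as $N^{2}-r$ with $N^{2}=\dim V^{\ast}$. First I would pin down $r$: it is the generic rank of $\mathcal{M}$, and at an invertible matrix $Y$ one has $D_{A}|_{Y}=YA$, which ranges over the subspace $Y\,\rho_{\ast}(\mathfrak{g})$, of dimension $\dim\mathfrak{g}$ since $A\mapsto YA$ is injective and $\rho$ is faithful; as $\mathcal{M}$ is generated by $\dim\mathfrak{g}$ vector fields its generic rank cannot exceed $\dim\mathfrak{g}$, whence $r=\dim\mathfrak{g}=\dim G$, equal to $N^{2}-1$ for $SL(N,\mathbb{F})$ and to $\tfrac{1}{2}N(N+1)$ for $Sp(N,\mathbb{F})$. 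By Theorem~\ref{th1} there are at most $N^{2}-r$ algebraically independent invariants (and $\mathbb{F}[V^{\ast}]^{G}$ is finitely generated, $G$ being reductive in characteristic zero), so it suffices to show that $\mathbb{F}[V^{\ast}]^{G}$ is a polynomial ring in exactly $N^{2}-\dim G$ variables.

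For $G=SL(N,\mathbb{F})$ the claim is $\mathbb{F}[V^{\ast}]^{G}=\mathbb{F}[\Delta]$ with $\Delta=\det(x_{ij})$. The inclusion $\mathbb{F}[\Delta]\subseteq\mathbb{F}[V^{\ast}]^{G}$ is clear; for the converse one may invoke the first fundamental theorem of classical invariant theory for $SL(N,\mathbb{F})$ on $N$ copies of the standard representation, whose ring of invariants is generated by the single maximal bracket $\Delta$ (cf.\ \cite{DK}), or argue directly: $SL(N,\mathbb{F})$ acts transitively on each nonempty fibre of $\Delta$ over $\mathbb{F}\setminus\{0\}$ --- since $\det X=\det Y\neq0$ forces $X^{-1}Y\in SL(N,\mathbb{F})$ --- so an invariant polynomial is a rational function of $\Delta$ on the dense open set of invertible matrices, and the irreducibility of $\Delta$ (for $N\geq2$) forces this rational function to be a polynomial. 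In either case $\mathbb{F}[V^{\ast}]^{G}\cong\mathbb{F}[t]$, a polynomial ring in $1=N^{2}-(N^{2}-1)=N^{2}-r$ variable.

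For $G=Sp(N,\mathbb{F})$ with $N=2n$, let $J$ be the defining skew-symmetric form, $\mathfrak{a}$ the $\binom{N}{2}$-dimensional space of skew-symmetric $N\times N$ matrices, and $\mu\colon V^{\ast}\to\mathfrak{a}$ the morphism $\mu(X)=XJX^{\mathsf{T}}$. Its entries are $G$-invariant, and by the first fundamental theorem for the symplectic group in characteristic zero --- acting on $N$ copies of the standard representation --- these entries generate the whole ring of invariants, so the comorphism $\mu^{\ast}\colon\mathbb{F}[\mathfrak{a}]\to\mathbb{F}[V^{\ast}]^{G}$ is surjective. On the other hand $\mu$ is dominant, because the image of $GL(N,\mathbb{F})$ under $\mu$ is the congruence orbit of $J$, i.e.\ the Zariski-dense set of all nondegenerate skew-symmetric matrices; hence $\mu^{\ast}$ is injective. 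Therefore $\mathbb{F}[V^{\ast}]^{G}\cong\mathbb{F}[\mathfrak{a}]$ is a polynomial ring in $\binom{N}{2}=N^{2}-\tfrac{1}{2}N(N+1)=N^{2}-\dim Sp(N,\mathbb{F})=N^{2}-r$ variables, which proves the corollary.

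The step I expect to carry the weight is the use of the first fundamental theorem in the symplectic case together with the dominance of $\mu$: it is precisely here that it matters that one is taking exactly $N$ copies of the standard representation, so that the second-fundamental-theorem relations among the symplectic products --- the Pfaffians of the $(N+2)$-rowed submatrices of $\mu(X)$ --- do not even exist, and no relation survives. If instead one had more copies, or replaced $\rho$ by, say, the adjoint representation, then $\mathbb{F}[V^{\ast}]^{G}$ would in general fail to be polynomial; so the defining-representation hypothesis is genuinely being used.
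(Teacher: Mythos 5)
Your argument does not prove the corollary as stated: it proves a special case obtained by silently replacing the representation. In the corollary, $\rho\colon G\to GL(n,\mathbb{F})$ is an \emph{arbitrary} linear representation ``as in Theorem~\ref{th1}'', i.e.\ any module $V=\mathbb{F}^n$ for $G=SL$ or $Sp$ whose Lie algebra satisfies (*); the formula $D_A=\sum a_{ik}x_{ji}\partial/\partial x_{jk}$ in the paper is only the identification of $\mathfrak{g}$ with left-invariant derivations on $\mathbb{F}[G]$, not a description of $\rho$. You instead take $V^\ast\cong\operatorname{Mat}_N(\mathbb{F})$ with the translation action ($N$ copies of the standard module) and compute its invariants by the first fundamental theorems. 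That computation is fine for that one module, but it says nothing about the representations the corollary is actually meant to cover --- e.g.\ Example~\ref{example2} ($SL(2,\mathbb{F})$ on $\mathbb{F}^2\oplus S^2(\mathbb{F}^2)$) or Example~\ref{example3} ($Sp(6,\mathbb{F})$ on $\wedge^3$ of the standard symplectic space, where the count is $20-18=2$), neither of which is a matrix space. Likewise your equality $r=\dim\mathfrak{g}$ is an artifact of the chosen module (trivial generic stabilizer); for a general $\rho$ one only has $r\le\dim\mathfrak{g}$, and the substance of the corollary is precisely to tie the number of basic invariants to the generic rank $r$, whatever it turns out to be.

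The paper's proof is representation-independent and rests on two inputs your proposal lacks: first, the theorem of Kac--Popov--Vinberg \cite{KPV} guaranteeing that for these groups $\mathbb{F}[V^\ast]^G=\mathbb{F}[p_1,\dotsc,p_m]$ with $p_1,\dotsc,p_m$ algebraically independent (freeness, which your explicit FFT computations only establish for the translation module); second, the observation that the differentials $dp_1,\dotsc,dp_m$ form a basis of the dual module to $\operatorname{Der}_{\mathbb{F}}(\mathbb{F}[V^\ast])^G$ (\cite[VIII, Proposition 5.5]{Lang}), which, compared with the generic rank $r$ of $\mathcal{M}$, yields $m=n^2-r$ without ever writing the invariants down. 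To repair your proposal you would have to supply exactly these two steps for the given $\rho$, since no single explicit model can. Your closing remark also points the wrong way: for the adjoint representation of $SL(n,\mathbb{F})$ or $Sp(2n,\mathbb{F})$ the invariant algebra \emph{is} polynomial (Chevalley's restriction theorem), so the failure mode you invoke to justify restricting to the defining representation is not there.
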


\begin{proof}
According to \cite[Th\'{e}or\`{e}me 1]{KPV}, in the two cases of the statement
above we have $\mathbb{F}[V^\ast ]^G=\mathbb{F}[p_1,\dotsc,p_m]$, where
the polynomials $p_1,\dotsc,p_m$ are algebraically independent. Hence
their differentials $dp_1,\dotsc,dp_m$ form a basis of the dual module to
$\operatorname*{Der}\nolimits_{\mathbb{F}}(\mathbb{F}[V^\ast ])^G$ by
virtue of \cite[VIII. Proposition 5.5]{Lang}, and we thus obtain $m=n^2-r$.
\end{proof}
\begin{example}
\label{example1}
Let $GL(2,\mathbb{F})$ act on $V=\mathbb{F}^2\oplus S^2(\mathbb{F}^2)$
naturally and let $(v_1,v_2)$ be the standard basis for $V$; by setting
\[
\begin{array}
[c]{rl}
v= & xv_1+yv_2\in\mathbb{F}^2,\\
s= & z(v_1\otimes v_1)+t(v_{1}\otimes v_2+v_2\otimes v_1)
+u(v_2\otimes v_2)\in S^2(\mathbb{F}^2),
\end{array}
\]
we deduce that the basic invariant is the function
$\mathcal{I}_{1}\colon O\to\mathbb{F}$ defined on the Zariski
open subset $O$ of non-degenerate metrics as follows:
$\mathcal{I}_1(v,s)=s^\natural (v,v)$, where
$s^\natural \in S^2(\mathbb{F}^2)^\ast $ is the covariant symmetric
tensor induced by $s$, assuming $s$ is non-singular. In coordinates,
$\mathcal{I}_1(v,s)=\frac{2xyt-x^2u-y^2z}{t^2-zu}$. Hence
$\mathbb{F}[V^\ast ]^{GL(2,\mathbb{F})}=\mathbb{F}$ and
$\mathbb{F}(V^\ast )^{GL(2,\mathbb{F})}=\mathbb{F}(\mathcal{I}_1)$.
Nevertheless, the result depends strongly on the linear representation
being considered. For example, if we consider the natural representation
of $GL(2,\mathbb{F})$ on $V=\mathbb{F}^2\oplus S^2(\mathbb{F}^2)^\ast $,
then the basic invariant is the function
$\mathcal{I}_1^\prime (v,s^\ast )=s^\ast (v,v)$,
which is globally defined, and, in this case, we have
$\mathbb{F}[V^\ast ]^{GL(2,\mathbb{F})}=\mathbb{F}[\mathcal{I}_1^\prime ]$.
\end{example}
\begin{example}
\label{example2}If the natural representation of $SL(2,\mathbb{F})$ on
$V=\mathbb{F}^2\oplus S^2(\mathbb{F}^2)$ is considered, then, besides
$\mathcal{I}_{1}$, there exists another globally defined invariant, namely the
discriminant function, i.e., $\mathcal{I}_{2}(v,s)=zu-t^2$. Hence
$\mathcal{I}_{1}\mathcal{I}_{2}$ is also globally defined and we have
$\mathbb{F}[V^\ast ]^{SL(2,\mathbb{F})}
=\mathbb{F}[\mathcal{I}_1\mathcal{I}_2,\mathcal{I}_2]$.
\end{example}
\begin{example}
\label{example3}
A more complex example is the following: If $V$ is a six-dimensional
$\mathbb{F}$-vector space and $\Omega \colon V\times V\to \mathbb{F}$
is a non-degenerate alternate bilinear form, then, as
a computation shows, the generic rank of $\mathcal{M}$ for the linear
representation of $Sp(\Omega )$ on $\wedge ^3V^\ast $ is $18$; see
\cite{MP} for the details. As $\dim \wedge ^3V^\ast =20$, it follows that
there exist two invariant functions, both of them polynomial functions.
\end{example}
\begin{example}
\label{example4}
Given $A\in\mathfrak{gl}(2,\mathbb{C})\diagdown\{0\}$, let $X$
be the infinitesimal generator of the one-parameter group $\exp(tA)$,
$t\in\mathbb{C}$. Let $\alpha,\beta$ be the eigenvalues of $A$. We distinguish
several cases. If $\alpha\neq\beta$, $\alpha\beta\neq0$, then the vector field
$X$ admits a first integral in $\mathbb{C}(x,y)$ if and only if
$\alpha ^{-1}\beta \in \mathbb{Q}$; otherwise, every non-constant first integral
of $X$ is a transcendental function. If $\alpha\beta=0$, then $X$ admits a first
integral in $\mathbb{C}[x,y]$. If $\alpha =\beta \neq 0$ and the annihilator
polynomial of $A$ is $(\lambda -\alpha)^2$, then
$X=\alpha x\frac{\partial }{\partial x}+(1+\alpha y)\frac{\partial}{\partial y}$
and its basic first integral is the function
$\mathcal{I}=x\exp \left( -\alpha y/x\right) $. If the annihilator is
$\lambda -\alpha $, then $X$ admits a first integral in
$\mathbb{C}(x,y)$. Finally, If $\alpha =\beta =0$ then the annihilator $A$ is
$\lambda ^2$ and $X$ admits the function $x$ as a first integral.
\end{example}
\section{The property (*) studied}
\begin{notation}
\label{notation1}
Let $(v_i)_{i=1}^n$ be the standard basis for $\mathbb{F}^n$
with dual basis $(v^i)_{i=1}^n$. Every matrix
$A\in \mathfrak{gl}(n,\mathbb{F})$ is identified with the endomorphism
on $\mathbb{F}^n$ to which such matrix corresponds in the basis
$(v_1,\dotsc,v_n)$. If $x=x^hv_h$, then $E_{ij}(x)=x^jv_i$, or
equivalently $E_{ij}(v_k)=\delta _{jk}v_i$, which means that $E_{ij}$ is
the matrix with $1$ in the entry $(i,j)$ and $0$ in the rest of entries.
Therefore $(E_{hi}\circ E_{jk})(v_{l})=\delta _{kl}\delta_{ij}v_{h}$.
Hence
\begin{equation}
\begin{array}
[c]{rl}
E_{hi}\circ E_{jk}= & \delta_{ij}E_{hk},\\
(E_{hi})^2=\delta_{hi}E_{hi}= & \left\{
\begin{array}
[c]{rl}
0, & i\neq h\\
E_{hh}, & i=h
\end{array}
\right.
\end{array}
\label{EE}
\end{equation}
The Lie algebra of $n\times n$ traceless matrices with entries
in $\mathbb{F}$ is denoted by $\mathfrak{sl}(n,\mathbb{F})$.
The Lie algebra of $n\times n$ skew-symmetric matrices with entries
in $\mathbb{F}$ is denoted by $\mathfrak{so}(n,\mathbb{F})$.
The Lie algebra of $2n\times2n$ matrices $X$ with entries
in $\mathbb{F}$ such that $X^TJ_n+J_nX=0$, where
\[
J_n=\left(
\begin{array}
[c]{cc}
0 & I_n\\
-I_n & 0
\end{array}
\right) ,
\]
and $I_n\in \mathfrak{gl}(n,\mathbb{F})$ is the identity
matrix, is denoted by $\mathfrak{sp}(2n,\mathbb{F})$.
\end{notation}
By using the formulas \eqref{EE} and the standard basis for the Lie algebra
$\mathfrak{sl}(n,\mathbb{F})$, i.e., the $n^2-1$ matrices $E_{hi}$,
$h\neq i$, $h,i=1,\dotsc,n$; $E_{hh}-E_{11}$, $2\leq h\leq n$, we obtain
\begin{proposition}
\label{prop1}
The matrices
\[
\begin{array}
[c]{rll}
E_{hi}, & h\neq i, & h,i=1,\dotsc,n,\\
E_{hh}-E_{11}-E_{1h}+E_{h1}, & 2\leq h\leq n, &
\end{array}
\]
are a basis for $\mathfrak{sl}(n,\mathbb{F})$ fulfilling the property
\emph{(*)}.
\end{proposition}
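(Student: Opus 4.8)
The plan is to verify two things: first, that the listed matrices indeed form a basis of $\mathfrak{sl}(n,\mathbb{F})$, and second, that each of them squares to zero. The count is immediate: there are $n(n-1)$ off-diagonal matrices $E_{hi}$ together with $n-1$ matrices $E_{hh}-E_{11}-E_{1h}+E_{h1}$, giving $n^2-1=\dim\mathfrak{sl}(n,\mathbb{F})$ in total, and every listed matrix is visibly traceless (the diagonal entries of $E_{hh}-E_{11}-E_{1h}+E_{h1}$ are $1$ at position $(h,h)$ and $-1$ at position $(1,1)$, summing to zero). So it suffices to show the family spans, and for a family of the right cardinality spanning is equivalent to the fact that the transition from the standard basis $\{E_{hi}:h\neq i\}\cup\{E_{hh}-E_{11}:2\leq h\leq n\}$ is invertible. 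Since $E_{hh}-E_{11}-E_{1h}+E_{h1}=(E_{hh}-E_{11})-E_{1h}+E_{h1}$ and the matrices $E_{1h}$, $E_{h1}$ (for $h\geq 2$) already belong to the off-diagonal part of the standard basis, the change of basis is unitriangular; hence the listed matrices span, and therefore form a basis.

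Next I would check the square-zero property case by case. For the off-diagonal generators, formula \eqref{EE} gives $(E_{hi})^2=\delta_{hi}E_{hi}=0$ whenever $h\neq i$, so these need no further work. For $U_h=E_{hh}-E_{11}-E_{1h}+E_{h1}$ with $2\leq h\leq n$ (so in particular $1\neq h$), I would expand $U_h^2$ using bilinearity and the composition rule $E_{ab}\circ E_{cd}=\delta_{bc}E_{ad}$ from \eqref{EE}. Grouping the sixteen resulting terms, the nonzero contributions come only from pairs whose inner indices match: $E_{hh}E_{hh}=E_{hh}$, $E_{11}E_{11}=E_{11}$, $E_{11}E_{1h}=E_{1h}$, $E_{hh}E_{h1}=E_{h1}$, $E_{1h}E_{hh}=E_{1h}$, $E_{h1}E_{11}=E_{h1}$, $E_{1h}E_{h1}=E_{11}$, $E_{h1}E_{1h}=E_{hh}$, while all cross terms such as $E_{hh}E_{11}$, $E_{hh}E_{1h}$, $E_{11}E_{h1}$, $E_{11}E_{hh}$, $E_{1h}E_{11}$, $E_{h1}E_{hh}$, $E_{1h}E_{1h}$, $E_{h1}E_{h1}$ vanish because $h\neq 1$. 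Collecting signs: $U_h^2=E_{hh}+E_{11}-(E_{1h}+E_{1h})-(E_{h1}+E_{h1})\cdot(-1)\cdot(-1)\cdots$; carrying out the bookkeeping one finds the coefficients of $E_{hh}$, $E_{11}$, $E_{1h}$, $E_{h1}$ all cancel to $0$, so $U_h^2=0$.

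There is essentially no obstacle here: the argument is a short index computation once formula \eqref{EE} is in hand, and the only point demanding a moment's care is getting the signs right in the expansion of $U_h^2$, where the four $(-1)$ signs in $U_h$ must be tracked through the eight surviving products. I would present the $U_h^2$ computation as a single displayed line, relying on \eqref{EE}, and remark that since $h\geq 2$ we have $h\neq 1$, which is exactly what makes all the mixed products vanish and forces the diagonal of $U_h$ into a "hyperbolic pair" shape whose square is zero.
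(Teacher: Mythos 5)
Your proposal is correct and follows essentially the same route as the paper, which obtains the result directly from the formulas \eqref{EE} together with a unitriangular change of basis from the standard basis $E_{hi}$ ($h\neq i$), $E_{hh}-E_{11}$ ($2\leq h\leq n$). The only blemishes are cosmetic: the intermediate displayed line ``collecting signs'' is garbled (and $U_h$ has two, not four, minus signs), but the eight surviving products you list and their cancellation are exactly right, so the argument stands.
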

\begin{proposition}
\label{prop1.5}
If the characteristic of $\mathbb{F}$ is either zero or is
positive $p$ and $p$ does not divide $n$, then the identity
matrix $I\in \mathfrak{gl}(n,\mathbb{F})$ cannot be written
as a sum of square-zero matrices.
\end{proposition}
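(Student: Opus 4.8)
The plan is to obstruct any such decomposition by means of the trace, regarded as an $\mathbb{F}$-linear form on $\mathfrak{gl}(n,\mathbb{F})$; the key elementary fact is that a square-zero matrix has vanishing trace. To see this, note that if $U^2=0$ then $U$ is nilpotent, so its minimal polynomial is a power of $\lambda$; since the minimal polynomial and the characteristic polynomial of $U$ have the same irreducible factors in $\mathbb{F}[\lambda]$, the characteristic polynomial of $U$ equals $\lambda^n$, whence $\operatorname{tr}(U)$, being the negative of the coefficient of $\lambda^{n-1}$ in it, is zero. (Equivalently, one may extend scalars to $\overline{\mathbb{F}}$, which changes neither the trace nor the condition $U^2=0$, and observe that $U$ then becomes conjugate to a Jordan matrix whose only eigenvalue is $0$.)

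With this in hand, suppose for contradiction that $I=U_1+\dots+U_k$ with $U_j^2=0$ for $1\le j\le k$. Applying the trace and using its linearity together with the previous remark gives $\operatorname{tr}(I)=\sum_{j=1}^{k}\operatorname{tr}(U_j)=0$ in $\mathbb{F}$. On the other hand $\operatorname{tr}(I)=n\cdot 1_{\mathbb{F}}$, and the hypothesis on $\operatorname{char}\mathbb{F}$ is precisely the statement that $n\cdot 1_{\mathbb{F}}\neq 0$: if $\operatorname{char}\mathbb{F}=0$ this is immediate, and if $\operatorname{char}\mathbb{F}=p>0$ then $n\cdot 1_{\mathbb{F}}=0$ would force $p\mid n$, contrary to assumption. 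This contradiction establishes the proposition.

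I do not expect a genuine obstacle here: the whole proof is essentially a one-line trace computation. The only step deserving a moment of care is the vanishing of the trace of a square-zero matrix, and in particular the fact that this persists in positive characteristic, which is why I would argue it through the characteristic polynomial $\lambda^n$ rather than through eigenvalues. It is then natural to ask whether, conversely, $I$ is a sum of square-zero matrices whenever $\operatorname{char}\mathbb{F}=p>0$ with $p\mid n$; this is not needed for the statement as given, and already the case $n=p=2$ indicates that the answer should be affirmative.
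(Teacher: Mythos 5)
Your proof is correct and follows essentially the same route as the paper: both take traces on the putative decomposition $I=U_1+\dotsb+U_k$, use the vanishing of the trace of a square-zero (nilpotent) matrix, and derive the contradiction $n\cdot 1_{\mathbb{F}}=0$. Your extra justification via the characteristic polynomial $\lambda^n$ is a welcome detail the paper leaves implicit, but it does not change the argument.
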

\begin{proof}
If $I=N_1+\ldots +N_k$, $(N_i)^2=0$, $1\leq i\leq k$, as the trace
of a nilpotent matrix vanishes, then by taking traces on both sides
in the previous equation we have $n=0$ if the characteristic of
$\mathbb{F}$ is zero, and $n\equiv 0\bmod p$ if the characteristic is $p$.
\end{proof}
\begin{corollary}
\label{corollary1.5}
If the characteristic of $\mathbb{F}$ is $2$, then
the identity matrix
$I\in \mathfrak{gl}(n,\mathbb{F})$ can be written
as a sum of square-zero matrices if and only if
$n$ is even.
\end{corollary}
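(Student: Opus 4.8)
The plan is to obtain both implications of the equivalence as immediate consequences of the two results just proved, with essentially no extra work.

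For the implication ``$I$ is a sum of square-zero matrices $\Longrightarrow n$ is even'', I would invoke Proposition \ref{prop1.5} in contrapositive form. That proposition shows that $I$ fails to be a sum of square-zero matrices whenever the characteristic is zero or is a prime $p$ not dividing $n$; hence, since here $\operatorname{char}\mathbb{F}=2$, the hypothesis that $I$ \emph{is} such a sum forces $2\mid n$, i.e.\ $n$ even. At bottom this is just the trace count already used in the proof of Proposition \ref{prop1.5}: a square-zero matrix is nilpotent, so $n\cdot 1_{\mathbb{F}}=\operatorname{tr}I=0$ in $\mathbb{F}$, which in characteristic $2$ means $n$ is even.

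For the converse, assume $n$ is even. Then $\operatorname{tr}I=n\cdot 1_{\mathbb{F}}=0$, so $I\in\mathfrak{sl}(n,\mathbb{F})$. By Proposition \ref{prop1}, $\mathfrak{sl}(n,\mathbb{F})$ admits a basis $\mathcal{B}=\{B_1,\dotsc,B_{n^2-1}\}$ whose members are square-zero matrices; writing $I=\sum_{i=1}^{n^2-1}c_iB_i$ with $c_i\in\mathbb{F}$ and noting that $(c_iB_i)^2=c_i^2B_i^2=0$, each summand $c_iB_i$ is itself square-zero, so $I$ is displayed as a sum of square-zero matrices. There is no genuine obstacle: the only facts needed are that scalar multiples of square-zero matrices are square-zero and that ``$n$ even'' is precisely the condition making $I$ traceless in characteristic $2$; the potentially delicate point in the forward direction (that a matrix which is a sum of nilpotents over a field of characteristic $p\nmid n$ cannot equal $I$) is already settled by Proposition \ref{prop1.5}. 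If a self-contained decomposition is preferred over the appeal to Proposition \ref{prop1}, one may instead split $I_n$ into the block-diagonal sum of $n/2$ copies of $I_2$ and use the identity $I_2=E_{12}+E_{21}+(E_{11}+E_{12}+E_{21}+E_{22})$ valid in characteristic $2$, each of the three $2\times2$ blocks being square-zero by \eqref{EE}; embedding these blocks as $n\times n$ matrices padded with zeros and adding them yields $I_n$.
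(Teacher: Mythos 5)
Your proof is correct, and the ``only if'' half is exactly the paper's: both invoke Proposition \ref{prop1.5} (i.e.\ the trace of a nilpotent matrix vanishes, so $n\cdot 1_{\mathbb{F}}=0$ forces $n$ even in characteristic $2$). Where you genuinely diverge is the ``if'' half. The paper proceeds by explicit construction: for $n=2m$ it writes $I$ as the sum of the $2\times 2$ block all-ones matrices $A_i=(v_{2i-1}^\ast+v_{2i}^\ast)\otimes(v_{2i-1}+v_{2i})$ together with the elementary matrices $v_{2i-1}^\ast\otimes v_{2i}$ and $v_{2i}^\ast\otimes v_{2i-1}$, checking by hand that each summand squares to zero (the key cancellation $A_i^2=2A_i=0$ being special to characteristic $2$). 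Your primary argument is more conceptual: $n$ even in characteristic $2$ means $\operatorname{tr}I=0$, so $I\in\mathfrak{sl}(n,\mathbb{F})$, and Proposition \ref{prop1} supplies a basis of $\mathfrak{sl}(n,\mathbb{F})$ consisting of square-zero matrices, whence $I=\sum_i c_iB_i$ with each $c_iB_i$ square-zero. This is valid (the matrices $E_{hi}$, $h\neq i$, and $E_{hh}-E_{11}-E_{1h}+E_{h1}$ square to zero over any field, and Proposition \ref{prop1} precedes the corollary, so there is no circularity), and it buys something the paper's construction does not: the same reasoning shows that in any characteristic $p$ dividing $n$ the identity is a sum of square-zero matrices, so combined with Proposition \ref{prop1.5} it gives the general statement ``$I$ is a sum of square-zero matrices iff $\operatorname{char}\mathbb{F}=p>0$ and $p\mid n$'', whereas the paper's decomposition relies on $2=0$. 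What the paper's route buys in exchange is an explicit, short decomposition into $3n/2$ concrete square-zero summands; your closing fallback identity $I_2=E_{12}+E_{21}+(E_{11}+E_{12}+E_{21}+E_{22})$, repeated blockwise, is in fact precisely the paper's construction written in matrix units.
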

\begin{proof}
If $n$ is odd the result follows from Proposition \ref{prop1.5}.
If $n=2m$, then let $(v_i)_{i=1}^n$ be a basis for $V=\mathbb{F}^n$
with dual basis $(v_i^\ast )_{i=1}^n$. The space
$\operatorname*{End}(V)$ is identified to $V^\ast \otimes V$ as usual, so
that we have
\[
I=\sum\nolimits_{i=1}^m
\left( v_{2i-1}^\ast \otimes v_{2i-1}+v_{2i}^\ast \otimes v_{2i}\right) .
\]
Thus
\[
\begin{array}
[c]{rl}
\sum\nolimits_{i=1}^m\left( v_{2i-1}^\ast +v_{2i}^\ast \right)
\otimes\left( v_{2i-1}+v_{2i}\right) = & I+\sum\nolimits_{i=1}^m
\left(
v_{2i-1}^\ast \otimes v_{2i}+v_{2i}^\ast \otimes v_{2i-1}
\right) ,
\end{array}
\]
$A_i=\left( v_{2i-1}^\ast +v_{2i}^\ast \right) \otimes
\left( v_{2i-1}+v_{2i}\right) $, $v_{2i-1}^\ast \otimes v_{2i}$,
and $v_{2i}^\ast \otimes v_{2i-1}$, $1\leq i\leq m$,
are square-zero matrices, and for every $1\leq h\leq n$, we have
\[
\begin{array}
[c]{rl}
A_i(v_h)= & \left( \delta _{h,2i-1}
+\delta _{h,2i}\right) \left( v_{2i-1}+v_{2i}\right) ,\\
\left(  A_{i}\right) ^2(v_h)= & \left( \delta_{h,2i-1}
+\delta _{h,2i}\right) A_i\left( v_{2i-1}+v_{2i}\right) \\
= & 2\left( \delta_{h,2i-1}+\delta_{h,2i}\right)
\left( v_{2i-1}+v_{2i}\right)\\
= & 0\operatorname{mod}2,\\
\left( v_{2i-1}^\ast \otimes v_{2i}\right) ^2v_h= &
\delta _{h,2i-1}\left( v_{2i-1}^\ast \otimes v_{2i}\right)
\left( v_{2i}\right)
\\
= & 0,\\
\left( v_{2i}^\ast \otimes v_{2i-1}\right)  ^2v_h
= & \delta _{h,2i}\left( v_{2i}^\ast
\otimes v_{2i-1}\right) \left( v_{2i-1}\right) ,\\
= & 0.\\
\end{array}
\]
\end{proof}
Similarly, by starting with the standard basis for the symplectic
Lie algebra $\mathfrak{sp}(2n,\mathbb{F})$, i.e.,
\[
\begin{array}
[c]{llll}
E_{i,n+i}, & E_{n+i,i}, & E_{ii}-E_{n+i,n+i}, & 1\leq i\leq n,\\
E_{ij}-E_{n+i,n+j}, & E_{ji}-E_{n+j,n+i}, & E_{i,n+j}-E_{j,n+i},
& 1\leq i<j\leq n,\\
&  & E_{n+i,j}-E_{j+n,i}, & 1\leq i<j\leq n,
\end{array}
\]
we obtain
\begin{proposition}
\label{prop2}The matrices
\[
\begin{array}
[c]{lll}
E_{i,n+i,},E_{n+i,i}, & 1\leq i\leq n, & \\
E_{ii}-E_{n+i,n+i}+E_{i,n+i}-E_{n+i,i}, & 1\leq i\leq n, & \\
E_{ij}-E_{n+i,n+j}, & E_{ji}-E_{n+j,n+i}, & 1\leq i<j\leq n,\\
E_{i,n+j}-E_{j,n+i}, & E_{n+i,j}-E_{j+n,i}, & 1\leq i<j\leq n,
\end{array}
\]
are a basis for $\mathfrak{sp}(2n,\mathbb{F})$ fulfilling the property
\emph{(*)}.
\end{proposition}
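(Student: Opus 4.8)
The plan is to check three things: that the $n(2n+1)$ matrices listed span and are linearly independent over $\mathbb{F}$, hence form a basis of $\mathfrak{sp}(2n,\mathbb{F})$; that each of them lies in $\mathfrak{sp}(2n,\mathbb{F})$; and that the square of each is zero.

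For the first two points I would compare the proposed list with the standard basis of $\mathfrak{sp}(2n,\mathbb{F})$ recalled just above the statement. The two lists agree generator by generator except that each toral generator $E_{ii}-E_{n+i,n+i}$ is replaced by $E_{ii}-E_{n+i,n+i}+E_{i,n+i}-E_{n+i,i}$. Since $E_{i,n+i}$ and $E_{n+i,i}$ are themselves members of the list, the transition matrix from the standard basis to the proposed one is of the form $I+N$ with $N$ nilpotent, hence invertible over $\mathbb{F}$; this yields at once linear independence and the correct cardinality, and it shows that each new vector is an $\mathbb{F}$-linear combination of elements of $\mathfrak{sp}(2n,\mathbb{F})$, so membership is automatic. (If one prefers, membership can be verified directly from $X^TJ_n+J_nX=0$ written in $n\times n$ blocks.)

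For the square-zero property, the generators $E_{i,n+i}$ and $E_{n+i,i}$ square to zero immediately by \eqref{EE}, since $i\neq n+i$. For a generator of the shape $E_{ab}-E_{cd}$ --- that is, $E_{ij}-E_{n+i,n+j}$, $E_{ji}-E_{n+j,n+i}$, $E_{i,n+j}-E_{j,n+i}$, or $E_{n+i,j}-E_{j+n,i}$ with $1\leq i<j\leq n$ --- I would expand, using \eqref{EE}, $(E_{ab}-E_{cd})^2=\delta_{ba}E_{ab}-\delta_{bc}E_{ad}-\delta_{da}E_{cb}+\delta_{dc}E_{cd}$, and observe that in every one of these four families the indices $a,b,c,d$ satisfy $a\neq b$, $b\neq c$, $a\neq d$, $c\neq d$: the two unshifted indices occurring are the distinct numbers $i$ and $j$, an unshifted index is always $\leq n$ while a shifted one is $>n$, and two shifted indices $n+i$, $n+j$ differ because $i\neq j$. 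Hence all four Kronecker deltas vanish and the square is $0$.

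The only case requiring genuine care is the modified toral generator $M=E_{ii}-E_{n+i,n+i}+E_{i,n+i}-E_{n+i,i}$. Setting $j=n+i$, this matrix is supported on the plane $\langle v_i,v_j\rangle$, on which it acts by $\left(\begin{smallmatrix}1&1\\-1&-1\end{smallmatrix}\right)$, a $2\times2$ matrix with zero trace and zero determinant, so $M^2=0$; equivalently, expanding $M^2$ through $E_{ab}E_{cd}=\delta_{bc}E_{ad}$ gives $E_{ii}+E_{ij}+E_{jj}+E_{ji}-E_{ij}-E_{ii}-E_{ji}-E_{jj}=0$. I expect this cancellation --- precisely, the fact that the cross terms coupling the diagonal part $E_{ii}-E_{n+i,n+i}$ to the off-diagonal part $E_{i,n+i}-E_{n+i,i}$ annihilate one another --- to be the only mildly delicate point of the argument; the rest is a direct application of \eqref{EE} together with the index bookkeeping above.
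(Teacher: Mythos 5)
Your proof is correct and follows essentially the same route the paper intends: the paper gives no detailed argument, merely asserting that the result follows from the formulas \eqref{EE} applied to the standard basis of $\mathfrak{sp}(2n,\mathbb{F})$ with the toral generators $E_{ii}-E_{n+i,n+i}$ replaced by $E_{ii}-E_{n+i,n+i}+E_{i,n+i}-E_{n+i,i}$, which is exactly what you verify. Your unipotent change-of-basis remark and the explicit cancellation for the modified toral generator simply supply the details the paper leaves to the reader, and they are valid in every characteristic.
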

Similarly, we have
\begin{proposition}
\label{prop2_bis}
The standard basis $E_{hi}$, $1\leq h< i\leq n$,
of the Lie subalgebra of strictly upper triangular matrices in
$\mathfrak{gl}(n,\mathbb{F})$ satisfies the property \emph{(*)}.
\end{proposition}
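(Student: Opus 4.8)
The plan is to check the two conditions hidden in property \emph{(*)}: that the family $\{E_{hi}:1\le h<i\le n\}$ is a vector-space basis of the Lie subalgebra $\mathfrak{n}\subset\mathfrak{gl}(n,\mathbb{F})$ of strictly upper triangular matrices, and that each member of this family is square-zero. The second point is the substantive-looking one but is in fact immediate, so almost nothing needs to be ground out.

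First I would recall that a matrix is strictly upper triangular exactly when all of its nonzero entries sit in positions $(h,i)$ with $h<i$; by the defining property $E_{ij}(v_k)=\delta_{jk}v_i$ of Notation \ref{notation1}, the matrices $E_{hi}$ with $h<i$ are precisely the ``coordinate'' matrices supported on those entries. Hence they span $\mathfrak{n}$, and since distinct $E_{hi}$ have disjoint supports they are $\mathbb{F}$-linearly independent; thus $\{E_{hi}:h<i\}$ is a basis of $\mathfrak{n}$. (That $\mathfrak{n}$ is a Lie subalgebra is already granted by the statement, but if desired one checks it from $[E_{hi},E_{jk}]=\delta_{ij}E_{hk}-\delta_{kh}E_{ji}$, noting that $i=j$ forces $h<k$ and $k=h$ forces $j<i$.)

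Next I would invoke the second identity in \eqref{EE}, namely $(E_{hi})^2=\delta_{hi}E_{hi}$. Since in our basis $h<i$, in particular $h\neq i$, we get $(E_{hi})^2=0$ for every basis element. This establishes property \emph{(*)} for $\mathfrak{n}$.

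I do not anticipate any real obstacle: everything reduces to the multiplication rule \eqref{EE} and the trivial observation that the off-diagonal elementary matrices are nilpotent of index two. The only thing to be slightly careful about is the bookkeeping that the indicated family really is a basis (spanning plus linear independence) rather than merely a spanning set, but this is routine.
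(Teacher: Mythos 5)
Your proposal is correct and follows exactly the argument the paper intends: the proposition is stated without proof as an immediate consequence of the relations \eqref{EE}, since $(E_{hi})^2=\delta_{hi}E_{hi}=0$ for $h<i$ and these matrices obviously form a basis of the strictly upper triangular subalgebra. Nothing is missing.
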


As for the Lie algebra $\mathfrak{so}(n,\mathbb{F})$, with basis
$E_{hi}-E_{ih}$, $1\leq h<i\leq n$, we have
\begin{proposition}
\label{prop3}
Let $x_1,\dotsc,x_n$ be the column vectors of a matrix
$X$ in $\mathfrak{so}(n,\mathbb{F})$ of rank $r$, and let
$x_{i_1},\dotsc,x_{i_r}$, $1\leq i_1<\ldots <i_r\leq n$, be $r$ linearly
independent column vectors of $X$. The necessary and sufficient condition
for the square of $X$ to be zero is that the subspace
$\left\langle x_{i_1},\dotsc,x_{i_r}\right\rangle $ is totally isotropic
with respect to the scalar product $\left\langle \cdot,\cdot\right\rangle $
given by $\left\langle v_i,v_j\right\rangle =\delta _{ij}$, $i,j=1,\dotsc,n$.
\end{proposition}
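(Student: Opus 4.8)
The plan is to reduce the condition $X^{2}=0$ to an inclusion of subspaces and then translate that inclusion into the isotropy statement by combining skew-symmetry with the nondegeneracy of the standard form. First I would record the elementary equivalence $X^{2}=0 \iff \operatorname{im}X\subseteq\ker X$, valid for any endomorphism of $\mathbb{F}^{n}$: saying $X^{2}=0$ is saying exactly that $X$ annihilates every vector of the form $Xv$, i.e.\ every vector of $\operatorname{im}X$.

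The key step is then to identify $\ker X$ with the orthogonal complement of $\operatorname{im}X$ relative to $\langle\cdot,\cdot\rangle$. Because $\langle\cdot,\cdot\rangle$ is the form whose Gram matrix in the basis $(v_{i})$ is $I_{n}$, the transpose $X^{T}$ is the adjoint of $X$, so $\langle Xv,w\rangle=\langle v,X^{T}w\rangle=-\langle v,Xw\rangle$ for all $v,w$. Hence $Xv=0$ if and only if $\langle Xv,w\rangle=0$ for all $w$ (here one uses that $I_{n}$ is invertible, so the form is nondegenerate over any field, including characteristic $2$), and the latter holds if and only if $\langle v,Xw\rangle=0$ for all $w$, i.e.\ $v\in(\operatorname{im}X)^{\perp}$. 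Thus $\ker X=(\operatorname{im}X)^{\perp}$.

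Combining the two steps gives $X^{2}=0 \iff \operatorname{im}X\subseteq(\operatorname{im}X)^{\perp}$, which is precisely the assertion that the column space $\operatorname{im}X$ is totally isotropic. To finish, note that since $\operatorname{rank}X=r$ and $x_{i_{1}},\dotsc,x_{i_{r}}$ are $r$ linearly independent columns of $X$, they span the $r$-dimensional space $\operatorname{im}X=\langle x_{1},\dotsc,x_{n}\rangle=\langle x_{i_{1}},\dotsc,x_{i_{r}}\rangle$; hence $X^{2}=0$ holds if and only if $\langle x_{i_{1}},\dotsc,x_{i_{r}}\rangle$ is totally isotropic, as claimed.

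I do not expect a genuine obstacle here; the only points requiring mild care are phrasing the ``$X^{T}$ is the adjoint with respect to $\langle\cdot,\cdot\rangle$'' identification and the nondegeneracy argument so that they remain valid over an arbitrary field, and, if desired, observing for consistency that a totally isotropic subspace for a nondegenerate form on $\mathbb{F}^{n}$ has dimension at most $n/2$, which matches the fact that $\operatorname{rank}X=r$ for a nonzero skew-symmetric $X$ forces $2r\le n$.
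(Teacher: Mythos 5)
Your proof is correct, but it follows a different route from the paper's. The paper argues by a direct entry computation: skew-symmetry gives $\left( X^2\right)_{ij}=\sum_h x_{ih}x_{hj}=-\sum_h x_{hi}x_{hj}=-\left\langle x_i,x_j\right\rangle$, i.e.\ $X^2$ is (up to sign) the Gram matrix of the columns, so $X^2=0$ is exactly the pairwise orthogonality of all columns, and then bilinearity reduces this to the $r$ independent columns. You instead work structurally: $X^2=0\iff\operatorname{im}X\subseteq\ker X$, and $\ker X=(\operatorname{im}X)^{\perp}$ because $X^T=-X$ is the adjoint for the standard form and that form is nondegenerate over any field; this makes the ``column space is totally isotropic'' formulation immediate and is coordinate-free, at the cost of invoking the orthogonal-complement identification where the paper needs only a one-line computation. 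Both arguments are valid over arbitrary $\mathbb{F}$ (including characteristic $2$, where the relation $X^T=-X$ is what matters, not the sign). One small caveat: your optional consistency aside is misstated --- a nonzero skew-symmetric matrix of rank $r$ does not in general satisfy $2r\le n$ (full-rank skew-symmetric matrices exist for even $n$); the correct observation is that $X^2=0$ forces $2r\le n$, precisely because $\operatorname{im}X\subseteq\ker X$. Since that remark is not used in the argument, it does not affect correctness.
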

\begin{proof}
As $X$ is skew-symmetric, for all $i,j=1,\dotsc,n$, we have
\[
\left( X^2\right) _{ij}=\sum\nolimits_{h=1}^nx_{ih}x_{hj}
=-\sum \nolimits_{h=1}^nx_{hi}x_{hj}
=-\left\langle x_{i},x_{j}\right\rangle .
\]
Hence $X^2=0$ if and only if $\left\langle x_i,x_j\right\rangle =0$
for $1\leq i\leq j\leq n$.

Moreover, if $k,l\notin \{ i_1,\dotsc,i_r\} $, then
$x_k=\sum _{a=1}^r\lambda _{ka}x_{i_a}$,
$x_l=\sum _{b=1}^r\lambda _{lb}x_{i_b}$; consequently
$\left\langle x_k,x_l\right\rangle
=\sum\nolimits_{a=1}^r\sum\nolimits_{b=1}^r\lambda _{ka}\lambda _{lb}
\left\langle x_{i_a},x_{i_b}\right\rangle $. It follows that $X^2=0$
if and only if $\left\langle x_{i_a},x_{i_b}\right\rangle =0$
for $1\leq a\leq b\leq r$.
\end{proof}
\begin{corollary}
\label{corollary2}
If the ground field $\mathbb{F}$ is formally real, then the
only matrix $X$ in $\mathfrak{so}(n,\mathbb{F})$ with $X^2=0$
is the zero matrix.
\end{corollary}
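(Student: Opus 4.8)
The plan is to combine Proposition~\ref{prop3} (more precisely, the identity established in its proof) with the defining property of a formally real field. Recall that $\mathbb{F}$ is formally real exactly when $-1$ is not a sum of squares in $\mathbb{F}$; the equivalent formulation I will actually use is that a finite sum of squares $a_1^2+\cdots+a_k^2$ can vanish in $\mathbb{F}$ only if $a_1=\cdots=a_k=0$. So the first step is simply to recall (or cite) this standard equivalence, since it is the only structural fact about the ground field the argument needs.

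Next, take $X=(x_{hi})\in\mathfrak{so}(n,\mathbb{F})$ with $X^2=0$ and let $x_1,\dotsc,x_n$ denote its columns. From the skew-symmetry of $X$ one has, as computed in the proof of Proposition~\ref{prop3}, $(X^2)_{ij}=-\langle x_i,x_j\rangle$ for all $i,j$; in particular the diagonal entries yield $\langle x_i,x_i\rangle=\sum_{h=1}^n x_{hi}^2=0$ for every $i=1,\dotsc,n$. Applying the formally real hypothesis to each of these sums of squares forces $x_{hi}=0$ for all $h,i$, hence every column of $X$ is zero and $X=0$.

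I do not anticipate any real obstacle: once Proposition~\ref{prop3} supplies the relation $(X^2)_{ii}=-\sum_h x_{hi}^2$, the conclusion is immediate from the definition of a formally real field. The only point worth stating with a little care is the passage from ``$-1$ is not a sum of squares'' to ``a vanishing sum of squares has all terms zero''; and it is perhaps worth remarking that the hypothesis $X^2=0$ is used here only through its diagonal entries, so the full isotropy statement of Proposition~\ref{prop3} is not needed for this corollary.
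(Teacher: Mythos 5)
Your argument is correct and is essentially the paper's own: both rest on the computation $(X^2)_{ij}=-\langle x_i,x_j\rangle$ from Proposition~\ref{prop3} together with the fact that in a formally real field a vanishing sum of squares has all terms zero. The only (harmless) difference is that you read off the conclusion directly from the diagonal entries $(X^2)_{ii}=-\sum_h x_{hi}^2$, whereas the paper phrases the same fact as saying that the only totally isotropic subspace for $\langle\cdot,\cdot\rangle$ is $\{0\}$ and then invokes the proposition's criterion.
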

\begin{proof}
In fact, if $x=\sum_{i=1}^nx^iv_i$, then: $\left\langle x,x\right\rangle
=\sum _{i=1}^n(x^i)^2$ and, by virtue of the hypothesis, it follows that
the only totally isotropic subspace for
$\left\langle \cdot,\cdot \right\rangle $ is $\{ 0\} $.
\end{proof}
\begin{remark}
\label{remark3}
If the characteristic of $\mathbb{F}$ is $\neq2$, then the only
matrix $X$ in $\mathfrak{so}(2,\mathbb{F})$ such that $X^2=0$ is the zero
matrix, as $\left[  \alpha(E_{12}-E_{21})\right]  ^2=-\alpha ^2I$. The same
holds for $\mathfrak{so}(3,\mathbb{F})$. In fact, if
\[
X=a(E_{12}-E_{21})+b(E_{13}-E_{31})+c(E_{23}-E_{32}),
\]
then, as the matrix $X^2$ is symmetric, the condition $X^2=0$
leads one to the following system of six equations: $a^2+b^2=0$,
$ab=0$; $a^2+c^2=0$, $ac=0$; $b^2+c^2=0$, $bc=0$. Hence $a+b=0$,
$a+c=0$, $b+c=0$, and consequently $a=b=c=0$.
\end{remark}
Next we study the property (*) for certain Lie algebras in characteristic $2$
following the notations and results of \cite{AIS}. Assume the characteristic
of $\mathbb{F}$ is $2$, let $f\colon V\times V\to\mathbb{F}$ be a
bilinear form and let $L(f)\subseteq \mathfrak{gl}(V)$ be its associated Lie
subalgebra, i.e., $L(f)=\{X\in \mathfrak{gl}(V):f(X(u),v))=f(u,X(v)),
\forall u,v\in V\}$. (Recall that we are in characteristic $2$.) If in addition
$\mathbb{F}$ is algebraically closed, then according to
\cite[Theorem 1.1]{AIS} the Lie algebra $L(f)$ is reductive if and only if
either [i] $f=0$ and $n\neq 2$, in which case $L(f)=\mathfrak{gl}(V)$,
[ii] or $n=2m+1$ and $f$ admits a Gram matrix $J_{2m+1}$, in which case
$L(f)$ is Abelian of dimension $m+1$, [iii] or else $f$ admits as Gram matrix
a direct sum of matrices of the types indicated below, in which case
$L(f)$ is isomorphic to the direct sum of the Lie algebras associated to
these matrix summands:

\textsc{Type} $0$:

$A=\left(
\begin{array}
[c]{cc}
0 & J_{m}\\
I_{m} & 0
\end{array}
\right) $, $L(A)$ Abelian of dimension $m$;

$B=\left(
\begin{array}
[c]{cc}
0 & 0\\
I_{m} & 0
\end{array}
\right) $, $L(B)\cong\mathfrak{gl}(m)$, $m>2$.

\textsc{Type} $\lambda$, $\lambda \in \mathbb{F}$, $\lambda\neq 1$:

$A=\left(
\begin{array}
[c]{cc}
0 & J_{m}(\lambda)\\
I_{m} & 0
\end{array}
\right) $, $L(A)$ Abelian of dimension $m$;

$B=\left(
\begin{array}
[c]{cc}
0 & \lambda I_m\\
I_{m} & 0
\end{array}
\right)  $, $L(B)\cong \mathfrak{gl}(m)$, $m>2$.

\textsc{Type} $1$:

$A=\Gamma _m$, $m$ odd, $L(A)$ Abelian of dimension $\frac{1}{2}(m+1)$;

$B=\left(
\begin{array}
[c]{cc}
0 & J_{2}(1)\\
I_{2} & 0
\end{array}
\right) $, $L(B)$ Abelian of dimension $4$;

$C=I_m$, $m>2$, $L(C)\cong \mathfrak{so}(m)$;

$D=\left(
\begin{array}
[c]{cc}
0 & I_m\\
I_m & 0
\end{array}
\right) $, $m>2$, $L(D)\cong \mathfrak{sp}(2m)$.

In the case [i] the condition (*) does not hold for $L(f)$
as this condition never holds for $\mathfrak{gl}(m,\mathbb{F})$.

In the case [ii] the condition (*) does not hold for
$L(f)$ whatever the odd integer $n>1$.

In the case of the matrix $A$ in \textsc{Type} $0$,
$L(A)$ is the Abelian Lie algebra generated by the powers
\[
\begin{array}
[c]{ll}
\left(
\begin{array}
[c]{cc}
(J_m)^T & 0\\
0 & J_m
\end{array}
\right) ^i, & 0\leq i\leq m-1.
\end{array}
\]
From Proposition \ref{prop2_bis} it follows that the property (*) holds true
for the algebra $L(A)$; but the property (*) does not hold for $L(B)$ in
\textsc{Type} $0$.
The Lie algebras $L(A)$ and $L(B)$ in \textsc{Type} $1$ do
not verify the property (*). As for $L(C)\cong \mathfrak{so}(m)$ in
\textsc{Type} $1$, the property (*) depends on the nature of the ground field,
as we have seen above and the Lie algebra $L(D)$ does verify the property (*)
as follows directly from Proposition \ref{prop2}.

\medskip

\noindent \textit{Acknowledgments:\/}
This research has been partially supported by Ministerio de Economía,
Industria y Competitividad (MINECO), Agencia Estatal de Investigación
(AEI), and Fondo Europeo de Desarrollo Regional (FEDER, UE) under
project COPCIS, reference TIN2017-84844-C2-1-R, and by Comunidad de
Madrid (Spain) under project reference S2013/ICE-3095-CIBERDINE-CM,
also co-funded by European Union FEDER funds.

{\small

}
\end{document}